\documentclass{amsart}

\usepackage{amsmath,amsthm,amssymb,mathtools}
\usepackage{url}
\usepackage{tikz-cd}

\theoremstyle{plain}
\newtheorem{theorem}{Theorem}
\newtheorem{lemma}[theorem]{Lemma}
\newtheorem{prop}[theorem]{Proposition}
\newtheorem{corollary}[theorem]{Corollary}
\newtheorem{definition}[theorem]{Definition}
\newtheorem{alg}[theorem]{Algorithm}
\theoremstyle{remark}
\newtheorem{example}[theorem]{Example}
\newtheorem{remark}[theorem]{Remark}
\numberwithin{theorem}{section}
\numberwithin{equation}{section}

\newcommand{\Q}{\mathbb{Q}}

\newcommand{\Z}{\mathbb{Z}}
\newcommand{\F}{\mathbb{F}}
\newcommand{\PP}{\mathbb{P}}
\newcommand{\fraka}{\mathfrak{a}}

\newcommand{\frakp}{\mathfrak{p}}
\newcommand{\frakq}{\mathfrak{q}}
\newcommand{\OO}{\mathcal{O}}
\newcommand{\OK}{{\tilde{\mathcal{O}}}}
\newcommand{\frakA}{\mathfrak{A}}

\newcommand{\frakP}{\mathfrak{P}}
\newcommand{\frakQ}{\mathfrak{Q}}
\newcommand{\calI}{\mathcal{I}}
\newcommand{\calP}{\mathcal{P}}
\newcommand{\calF}{\mathcal{F}}
\newcommand{\isom}{\cong}

\DeclareMathOperator{\Div}{Div}
\DeclareMathOperator{\Pic}{Pic}
\DeclareMathOperator{\ddiv}{div}
\DeclareMathOperator{\ord}{ord}
\DeclareMathOperator{\Chow}{Chow}
\DeclareMathOperator{\len}{len}

\DeclareMathOperator{\Cl}{Pic}
\DeclareMathOperator{\im}{im}
\DeclareMathOperator{\Spec}{Spec}
\DeclareMathOperator{\coker}{coker}

\begin{document}
\bibliographystyle{plain}
\title{Chow groups of one-dimensional noetherian domains}

\author{Markus Kirschmer}
\address{Universit\"at Bielefeld, Postfach 100131, 33501 Bielefeld, Germany}
\email{markus.kirschmer@math.uni-bielefeld.de}
\author{J\"urgen Kl\"uners}
\address{Universit\"at Paderborn, Fakult\"at EIM, Institut für Mathematik, Warburger Str. 100, 33098 Paderborn, Germany}
\email{klueners@math.uni-paderborn.de}

\subjclass[2020]{11R54, 11R65, 11Y40}
\keywords{Chow groups, Picard groups, Class groups, Orders}

\begin{abstract}
We discuss various connections between ideal classes, divisors, Picard and Chow groups of one-dimensional noetherian domains.
As a result of these, we give a method to compute Chow groups of orders in global fields and show that there are infinitely many number fields which contain orders with trivial Chow groups.
\end{abstract}

\maketitle

\section{Introduction}

Let $\OO$ be a one-dimensional noetherian domain with field of fractions $K$.
We assume that the normalization $\OK$ of $\OO$ is finitely generated over $\OO$. An important invariant of $\OO$ is the Picard group which is the quotient 
\[ \Cl(\OO):= \calI(\OO) / \{a \OO \mid a \in K^* \}\]
of all invertible fractional ideals modulo the principal fractional ideals of $\OO$. In the case $\OO=\OK$ this group is called the class group, and
every non-zero fractional ideal of $\OK$ is invertible and can be uniquely written as a product of maximal ideals.
A fundamental result of algebraic number theory states that the group $\Cl(\OK)$ is always finite if $K$ is a global field. 
The class number of $K$ is the order of the class group $\Cl(\OK)$.
We get that $\OK$ is a principal ideal domain if and only if it has class number 1.
The behavior of class groups is still mysterious and a famous open problem is the so-called class number 1 problem which is the question, if there exist infinitely many number fields with class number 1.
On the other hand we know many infinite families of number fields which have non-trivial class numbers, e.g. there are only finitely many imaginary quadratic number fields which have class number 1.

The Picard group gives no information about the structure of non-invertible ideals. The non-invertible prime ideals of $\OO$ are exactly the (finitely many) prime ideals of $\OO$ which contain the conductor
\( \calF := \{ x \in K \mid x\OK \subseteq \OO \} \). This
is the largest ideal of $\OK$ that is contained in $\OO$.  It is very classical (see Definition \ref{def:chow})  to define the Chow group 
\[ \Chow(\OO):= \Div(\OO) / \calP(\OO) \]
as the group of divisors $\Div(\OO)$ of $\OO$ modulo the principal divisors $\calP(\OO)$. 

The goal of this paper is to understand the behavior of Chow groups and its relation to $\Pic(\OO)$ or $\Cl(\OK)$.
For example, there is a canonical homomorphism from $\Pic(\OO) \rightarrow \Chow(\OO)$ and Proposition \ref{PicChow} discusses when this homomorphism is injective or surjective.
We are also interested in the relation between the Chow groups of two orders $\OO \subseteq \OO'$.
There is a canonical homomorphism $\Chow(\OO') \rightarrow \Chow(\OO)$ and its image and kernel are given by Lemma~\ref{lem:O1O2}.

Let $\frakp_1,\ldots,\frakp_r$ be the non-invertible maximal ideals of $\OO$. In Theorem \ref{Thm:GlobalChow} we show that we have an exact sequence
\[0 \rightarrow \Chow(\OK)/R \rightarrow \Chow(\OO) \rightarrow \bigoplus_{i=1}^r \Chow(\OO_{\frakp_i}) \rightarrow 0,\]
where $\Chow(\OO_{\frakp_i})$ is the Chow group of the localization of $\OO$ at $\frakp_i$ and $R$ denotes the kernel of the canonical homomorphism $\Chow(\OK) \to \Chow(\OO)$.
The local Chow groups are cyclic groups and the order can be computed by considering the degree of the residue field $\OO/\frakp_i$ and the degrees of the residue fields of prime ideals of $\OK$ lying above~$\frakp_i$.
In Theorem \ref{Thm:GlobalChow} we also show that the group of relations $R$ can be computed by considering all prime ideals of $\OK$ which lie over one of the prime ideals $\frakp_i$.
We remark that the above sequence is not necessarily split, see Example~\ref{ex:nonsplit}.

In the number field case, \cite{Picard} gives an algorithm to compute $\Pic(\OO)$ from $\Pic(\OK)$.
In the same vein, we show in Section 2 how to compute $\Chow(\OO)$ from $\Chow(\OK) \isom \Pic(\OK)$ provided we can factor the conductor of $\OO$.
It is well known how to do this in global fields.

Note that for a global function field $K/\F_q(t)$, we can consider the Chow group $\Chow(K)$ of the corresponding projective smooth curve. Following \cite[ch.~14]{Rosen} denote by $S_K$ all primes of $K$  and by $S$ the finitely many primes of $K$ above the infinite prime of $\F_q(t)$. Then the $S$-order $\OO_S:=\{a \in K \mid \ord_{\frakP}(a) \geq 0, \;\forall \frakP \in S_K\setminus S\}$ is our maximal order $\tilde\OO$. Then  $\Chow(K)$  is isomorphic to $\Z + \Chow^0(K)$, where $\Chow^0(K)=\Div^0(K)/\calP(K)$ are the divisors of degree 0 modulo the principal divisors. From \cite[Prop. 14.1]{Rosen} we get the exact sequence
\[0\rightarrow \Div^0(S)/\calP(S) \rightarrow \Chow^0(K) \rightarrow \Chow(\tilde\OO) \rightarrow C \rightarrow 0,\]
where $C$ is a cyclic group of order $d$, and $d$ is the gcd of the degrees of the divisors in $S$. The group $\Div^0(S)/\calP(S)$ is generated by the degree 0 divisors generated by the primes in $S$. This exact sequence is used in \cite[Kor 1.3]{Hess} to compute $\Chow(\tilde\OO)$ and the corresponding function SClassGroupExactSequence is available in Magma. 

In Theorem \ref{chow1} we show that there exist infinitely many number fields which admit an order with trivial Chow group.
As mentioned before, it is not known if there are infinitely many number fields with class number 1.

The isomorphism classes of fractional ideals of $\OO$ form a monoid, called the ideal class monoid.
Marseglia \cite{Marseglia} recently gave an algorithm to compute it in the case that $K$ is a number field.
Since the algorithm works out all over-orders of $\OO$, it can be quite time consuming.
The Chow group on the other hand is very easy to compute but yields less information than the ideal class monoid.
 

The paper is organized as follows.
In Section 2, we recall basic definitions regarding divisors and Chow groups.
We discuss canonical homomorphisms between such groups belonging to different rings.
In Theorem \ref{main} we show how to compute Chow groups and in Algorithm \ref{AlgPrinTest} we give a principal divisor test.
We end this section by analyzing the homomorphism between the Picard and Chow group of $\OO$.
Section 3 shows that many number fields contain orders with trivial Chow group and Section 4 gives some examples.
The last section corrects some wrong statements from \cite{MaximalI}. In Theorem \ref{prop:Fix} we describe all  $\OO$ such that $\OO^*=\OK^*$ and $\Pic(\OO) \isom \Cl(\OK)$.
In Theorem \ref{Thm:divinj} we show that the canonical homomorphism $\calI(\OO) \to \Div(\OO)$ is injective if and only if $\OO=\OK$.


\subsection*{Acknowledgments}
The authors thank Stefano Marseglia for helpful comments.
We are also grateful to the reviewer for numerous suggestions and the proof of part (2) of Lemma~\ref{Order_aOO}.

\section{Divisors and Chow groups}\label{Sec:Chow}

Let $\OO$ be a one-dimensional noetherian domain with field of fractions $K$.
Let $\OK$ be its normalization, i.e. the integral closure of $\OO$ in $K$.
We assume that $\OK$ is finitely generated over $\OO$.

One of the most prominent examples of such a setup is an algebraic number field $K$ with ring of integers $\OK$.
Then $\OO$ is an order in $K$, that is a subring of $K$ which is finitely generated as a $\Z$-module and that contains a $\Q$-basis of $K$.


Let $\PP(\OO)$ be the set of maximal ideals of $\OO$ and $\Spec(\OO) = \PP(\OO) \cup \{ (0) \}$ be the set of prime ideals of $\OO$. 
Suppose now $\OO' \subseteq \OK$ is a subring with $\OO \subseteq \OO'$.
For $\frakP \in \PP(\OO')$ the ideal $\frakp:= \frakP \cap \OO$ is maximal in $\OO$. This gives a map of affine schemes
\[ f \colon \Spec(\OO') \to \Spec(\OO),\; \frakP \mapsto \frakP \cap \OO \:. \]
By \cite[Lemma I.12.7]{Neukirch}, the degree $\deg_\frakp(\frakP)$ of $\OO'/\frakP$ over $\OO / \frakp$ is finite.

For a maximal ideal $\frakp \in \PP(\OO)$, let $\OO_\frakp$ be the localization of $\OO$ at $\frakp$.
More generally, if $M$ is an $\OO$-module, we denote by $M_\frakp:= M \otimes_\OO \OO_\frakp$ the localization of $M$ at $\frakp$.
We follow Neukirch \cite[Section I.12]{Neukirch} to define the divisor and Chow groups of $\OO$.
For $\frakp \in \PP(\OO)$ define
\[ \ord_\frakp \colon K^* \to \Z,\; \tfrac{a}{b} \mapsto \len_{\OO_\frakp}( \OO_\frakp / a \OO_\frakp) - \len_{\OO_\frakp}( \OO_\frakp / b \OO_\frakp) \]
where $\len_{\OO_\frakp}(M)$ denotes the composition length of an $\OO_\frakp$-module $M$.
Then $\ord_\frakp$ is a well defined homomorphism.

%

\begin{definition}\label{def:chow}
The group of divisors $\Div(\OO)$ is the free abelian group with the set of maximal ideals of $\OO$ as a basis.
The image $\calP(\OO)$ of the homomorphism
\[ \ddiv_\OO \colon K^* \to \Div(\OO),\; a \mapsto \sum_{\frakp \in \PP(\OO)} \ord_\frakp(a) \frakp \]
are the principal divisors of $\OO$ and 
\[ \Chow(\OO):= \Div(\OO) / \calP(\OO) \]
is the Chow group of $\OO$.
\end{definition}

More generally, the Chow group of a one-dimensional noetherian scheme is defined by $0$-cycles modulo rational equivalence to $0$, see \cite[Section 1.3]{Fulton}.
The $0$-cycles correspond to divisors in $\Div(\OO)$ and this correspondence identifies principal divisors with $0$-cycles which are rationally equivalent to $0$.

The set of invertible fractional ideals of $\OO$ forms an abelian group $\calI(\OO)$ with the set of principal ideals as a subgroup.
The quotient
\[ \Pic(\OO):= \calI(\OO) / \{a \OO \mid a \in K^* \}\]
is called the Picard group of $\OO$.
Being invertible is a local property.
For an invertible fractional ideal $\fraka$ of $\OO$ and $\frakp \in \PP(\OO)$ there exists some $a_\frakp \in K^*$ such that $\fraka_\frakp = a \OO_\frakp$, see \cite[Satz (12.4)]{Neukirch} for details.
Hence we can define
\[ \ddiv_\OO(\fraka):= \sum_\frakp \ord_\frakp(a_\frakp) \frakp \:, \]
which induces a homomorphism
\[ \ddiv_\OO \colon \calI(\OO) \to \Div(\OO)\:. \] 
It takes principal ideals to principal divisors, so it induces a homomorphism
\[ \overline{\ddiv}_\OO \colon \Pic(\OO) \to \Chow(\OO) \:. \]
Note that $\ddiv_\OO \colon \calI(\OO) \to \Div(\OO)$ extends the map $\ddiv_\OO \colon K^* \to \Div(\OO)$ from Definition~\ref{def:chow} in the sense that $\ddiv_\OO(a) = \ddiv_\OO(a \OO)$ for all $a \in K^*$.

The groups $\calI(\OK)$ and $\Div(\OK)$ are freely generated by the maximal ideals of $\OK$ hence $\ddiv_\OK\colon \calI(\OK) \to \Div(\OK)$ and $\overline{\ddiv}_\OK\colon \Pic(\OK) \to \Chow(\OK)$ are canonical isomorphisms.


We now come to a description of $\Chow(\OO)$, which is based on the following result.

\begin{prop}\label{Chow21}
\begin{enumerate}
\item
Let $a \in K^*$ and $\frakp \in \PP(\OO)$. If $\frakP_1,\dots,\frakP_r$ denote the prime ideals of $\OO'$ over $\frakp$, then
\[ \ord_\frakp(a) = \sum_{i=1}^r \ord_{\frakP_i}(a) \deg_{\frakp}(\frakP_i) \:. \]
\item
The map $f \colon \Spec(\OO') \to \Spec(\OO)$ induces the push-forward homomorphism
\[ f_* \colon \Div(\OO') \to \Div(\OO),\; \sum_{\frakP \in \PP(\OO')} a_\frakP \frakP \mapsto \sum_{\frakP \in \PP(\OO')} a_\frakP \deg_{f(\frakP)}(\frakP) \cdot f(\frakP) \:. \]
If $a \in K^*$, then
\begin{equation}\label{eq:fstar}
f_*(\ddiv_\OO'(a)) = f_* \left( \sum_{\frakP \in \PP(\OO')} \ord_{\frakP}(a) \frakP \right) = \sum_{\frakp \in \PP(\OO)} \ord_{\frakp}(a) \frakp  = \ddiv_\OO(a) \:.
\end{equation}
Hence $f_*$ maps principal divisors to principal divisors and therefore induces a homomorphism
\[ \overline{f_*} \colon \Chow(\OO') \to \Chow(\OO) \:. \]
\end{enumerate}
\end{prop}
\begin{proof}
See \cite[Example A.3.1]{Fulton} for a proof of part (1) and \cite[Theorem 1.4]{Fulton} or \cite[Lemma 42.18.1]{stacks-project} for a proof of (2).
\end{proof}

We will show in Example~\ref{ex:NeitherNor} that the homomorphism $\Chow(\OO') \to \Chow(\OO)$ is in general neither injective nor surjective.
Part (1) of Proposition~\ref{Chow21} immediately gives the following result.

\begin{corollary}\label{count}
Let $a \in \OK_\frakp$ be an element in the normalization.
Then $a$ is a unit if and only if $\ord_\frakp(a) = 0$.
\end{corollary}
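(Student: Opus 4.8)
The plan is to deduce the statement directly from Lemma \ref{ord} applied with $\OO' = \OK$, which is legitimate since $\OO \subseteq \OK$ and $\OK$ is finitely generated over $\OO$. Let $\frakP_1,\dots,\frakP_r$ be the maximal ideals of $\OK$ lying over $\frakp$. Each satisfies $\frakP_i \cap \OO = \frakp$, so the multiplicative set $\OO \setminus \frakp$ is contained in $\OK \setminus \frakP_i$; hence the localization $\OK_{\frakP_i}$ is a further localization of $\OK_\frakp$, and in particular a unit of $\OK_\frakp$ is a unit of $\OK_{\frakP_i}$ for every $i$.

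Now assume $a \in \OK_\frakp^*$. Then $a\OK_{\frakP_i} = \OK_{\frakP_i}$, so $\len_{\OK_{\frakP_i}}(\OK_{\frakP_i}/a\OK_{\frakP_i}) = 0$, and using the representation $a = a/1$ in the definition of $\ord_{\frakP_i}$ we get $\ord_{\frakP_i}(a) = 0$ for each $i$. Lemma \ref{ord} (with $\OO' = \OK$) then yields
\[ \ord_\frakp(a) = \sum_{i=1}^r \ord_{\frakP_i}(a)\, \deg_\frakp(\frakP_i) = 0. \]

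There is no real obstacle here. The only points that need to be recorded are that Lemma \ref{ord} applies with $\OO' = \OK$ and that each $\frakP_i$ survives the localization at $\frakp$ (equivalently $\OO \setminus \frakp \subseteq \OK \setminus \frakP_i$), both of which are immediate; the rest is just the trivial observation that units have order $0$. I would therefore expect the actual proof to be only two or three lines long.
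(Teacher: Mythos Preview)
Your proof is correct and is exactly the argument the paper intends: the corollary is stated without proof immediately after Lemma~\ref{ord}, so the paper treats it as an immediate consequence of that lemma applied with $\OO'=\OK$, just as you do. Your additional remark that $\OK_{\frakP_i}$ is a further localization of $\OK_\frakp$ (whence units stay units) is the only thing one might want to spell out, and you have done so.
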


\begin{example}\label{ex:root7}
Let $K = \Q(\sqrt{-7})$ and $\OO = \Z[\sqrt{-7}]$. Let $\overline{\phantom{x}} \colon K \to K$ denote the non-trivial automorphism.
Then $\OK = \Z[\alpha]$ where $\alpha:= \frac{1+\sqrt{-7}}{2}$.
The ideal $2 \OK$ factors into $\frakP \overline{\frakP}$ with the prime ideal $\frakP = \alpha \OK$ of norm $2$.
Consider the element
\[ a:= \frac{\alpha}{\overline{\alpha}} = \frac{1+\sqrt{-7}}{1-\sqrt{-7}} = \frac{-3 + \sqrt{-7}}{4} \in K^* \:. \]
Then $a\OK_\frakP = \frakP \OK_\frakP$. Hence $\ord_\frakP(a) = 1$ and similarly $\ord_{\overline{\frakP}}(a) = -1$.
Proposition~\ref{Chow21} shows that for $\frakp = \frakP \cap \OO = \overline{\frakP} \cap \OO$ we find $\ord_\frakp(a) = 1-1 = 0$.
\end{example}

The conductor 
\( \calF := \{ x \in K \mid x\OK \subseteq \OO \}\) of $\OO$ 
is the largest ideal of $\OK$ that is contained in $\OO$.
A maximal ideal $\frakp$ of $\OO$ is invertible (or regular) if and only if $\frakp$ is coprime to the conductor $\calF$.
If this is the case, then $\frakp \OK$ is the unique maximal ideal of $\OK$ over $\frakp = \frakp \OK \cap \OO$ and $\OK_{\frakp \OK} \isom \OO_\frakp$.
Thus $\OK/\frakp\OK \isom \OO/\frakp$ and therefore $\deg_\frakp(\frakp\OK) = 1$.
In particular, $f_*(\frakp\OO') = \frakp$ for any ring $\OO \subseteq \OO' \subseteq \OK$.
See \cite[Proposition I.12.10]{Neukirch} for details.
%
%

\begin{lemma}\label{lem:O1O2}
Let $\frakp_1,\dots,\frakp_r$ be the non-invertible maximal ideals of $\OO$.
For $1 \le i \le r$ let $\frakP_{i,1},\dots, \frakP_{i,r_i}$ be the maximal ideals of $\OO'$ over $\frakp_i$ and set
\[ g_i = \gcd(\deg_{\frakp_i}(\frakP_{i,1}), \dots, \deg_{\frakp_i}( \frakP_{i,r_i})) \:. \]
\begin{enumerate}
\item
The image and kernel of $f_* \colon \Div(\OO') \to \Div(\OO)$ are
\begin{align*}
 \im(f_*) &= \left\lbrace \sum_{\frakp \in \PP(\OO)} a_\frakp \frakp \mid a_{\frakp_i} \in g_i \Z \mbox{ for all } 1 \le i \le r \right\rbrace \\
\ker(f_*) &= \left\lbrace \sum_{i=1}^r \sum_{j=1}^{r_i} a_{i,j} \frakP_{i,j} \mid \sum_{j=1}^{r_i} a_{i,j} d_{i,j} = 0 \mbox{ for all } 1 \le i \le r  \right\rbrace \:.
\end{align*}
In particular, $\coker(f_*) \isom \prod_{i=1}^r \Z/g_i \Z$ and $f_*$ is injective if and only if $r_i=1$ for all $i$.
\item The image and kernel of $\overline{f_*} \colon \Chow(\OO') \to \Chow(\OO)$ are
\[ \im(\overline{f_*}) = \im(f_*)/\calP(\OO)  \quad \mbox{and} \quad  \ker(\overline{f_*}) = (\ker(f_*)+\calP(\OO'))/\calP(\OO') \:. \]
Moreover, $\coker(\overline{f_*}) \isom \coker(f_*)$.
\end{enumerate}
\end{lemma}
\begin{proof}
Part (1) follows from Proposition~\ref{Chow21} and the preceding discussion.
Equation~\eqref{eq:fstar} shows that $f_* \colon \calP(\OO') \to \calP(\OO)$ is surjective.
This gives the image and kernel of $\overline{f_*}$.
To prove the last assertion, consider the commutative diagram
\begin{center}
\begin{tikzcd}
0 \ar[r] & \calP(\OO) \ar[r] \ar[d] & \im(f_*) \ar[r] \ar[d] &\im(\overline{f_*}) \ar[r] \ar[d] & 0 \\
0 \ar[r] & \calP(\OO) \ar[r] & \Div(\OO) \ar[r] & \Chow(\OO) \ar[r] & 0
\end{tikzcd}
\end{center}
where the vertical arrows are the canonical inclusions. The snake lemma shows $\coker(\overline{f_*}) \isom \coker(f_*)$.
\end{proof}

For the remainder of the paper, we restrict to the case that $\OO' = \OK$ and fix the following notation. 
Let 
\[ f \colon \Spec(\OK) \to \Spec(\OO), \; \frakP \mapsto \frakP \cap \OO \]
and
let $\frakp_1,\dots,\frakp_r$ be the maximal ideals of $\OO$ that are not invertible.
For $1 \le i \le r$ let $\frakP_{i,1},\dots,\frakP_{i,r_i}$ be the maximal ideals of $\OK$ over $\frakp_i$.
Further, let $d_{i,j} = \deg_{\frakp_i}(\frakP_{i,j})$ be the degree of $\OK/\frakP_{i,j}$ over $\OO/\frakp_i$.
For $1 \le i \le r$ write
\begin{equation}\label{eq:gi}
 g_i := \gcd(d_{i,1},\dots, d_{i, r_i})  = \sum_{j=1}^{r_i} \lambda_{i,j} d_{i,j}
\end{equation}
with integers $\lambda_{i,j}$.
Then Lemma~\ref{lem:O1O2} yields the exact sequence
\[0 \rightarrow \im(\overline{f_*}) \rightarrow \Chow(\OO) \rightarrow \coker(\overline{f_*}) \isom \bigoplus_{i=1}^r \Z/g_i \Z \rightarrow 0 \:. \]


Since $\OK_{\frakp}$ has only finitely many prime ideals, it is a principal ideal domain.
Thus every ideal or divisor of $\OK_{\frakp}$ is principal and thus $\Chow(\OK_{\frakp}) \to \Chow(\OO_\frakp)$ is trivial.
Hence the preceding discussion yields the following result.

\begin{theorem}\label{Thm:GlobalChow}
\begin{enumerate}
\item The local Chow group $\Chow(\OO_\frakp)$ is trivial for every invertible prime ideal $\frakp$ of $\OO$.
\item The local Chow group $\Chow(\OO_{\frakp_i})$ is isomorphic to $ \Z/g_i \Z$ for all $1 \le i \le r$.
\item The sequence
\[0 \rightarrow \im(\overline{f_*}) \rightarrow \Chow(\OO) \rightarrow \bigoplus_{\frakp \in \PP(\OO)} \Chow(\OO_{\frakp}) \rightarrow 0  \]
is exact.
\end{enumerate}
\noindent{}Hence, $\Chow(\OO)$ is an extension of $\im(\overline{f_*}) \isom \Chow(\OK) / \ker(\overline{f_*})$
by the direct sum of the local Chow groups $\bigoplus_{i=1}^r \Chow(\OO_{\frakp_i}) \isom \bigoplus_{i=1}^r \Z/g_i \Z$.
\end{theorem}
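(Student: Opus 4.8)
The plan is to obtain everything from Corollary~\ref{cor:O1O2} together with one short local computation.

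First I would prove a uniform local statement that contains both (1) and (2): for \emph{any} maximal ideal $\frakp$ of $\OO$, if $\frakP_1,\dots,\frakP_s$ denote the maximal ideals of $\OK$ over $\frakp$ and $d_j := \deg_\frakp(\frakP_j)$, then $\Chow(\OO_\frakp)$ is cyclic of order $\gcd(d_1,\dots,d_s)$. Indeed $\OO_\frakp$ is a one-dimensional noetherian local domain, so $\Div(\OO_\frakp)$ is free of rank one on $\frakp\OO_\frakp$, and $\Chow(\OO_\frakp) \isom \Z/\im(\ord_\frakp)$, where I use that the order function of $\OO_\frakp$ at its maximal ideal is literally $\ord_\frakp$ restricted to $K^*$, since $(\OO_\frakp)_{\frakp\OO_\frakp} = \OO_\frakp$. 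By Lemma~\ref{ord} we have $\ord_\frakp(a) = \sum_{j=1}^s \ord_{\frakP_j}(a)\, d_j$ for all $a \in K^*$, so it remains to check that $a \mapsto (\ord_{\frakP_1}(a),\dots,\ord_{\frakP_s}(a))$ is surjective onto $\Z^s$: this holds because $\OK_\frakp$ is a semilocal Dedekind domain and hence a principal ideal domain, so each fractional ideal $\prod_j (\frakP_j\OK_\frakp)^{n_j}$ is of the form $a\OK_\frakp$ with $a \in K^*$. Therefore $\im(\ord_\frakp) = \sum_j d_j\Z = \gcd(d_1,\dots,d_s)\Z$. For an invertible $\frakp$ one has $s = 1$ and $d_1 = 1$, which gives (1); for $\frakp = \frakp_i$ the gcd is $g_i$, which gives (2).

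For (3) I would start from the exact sequence displayed right after Corollary~\ref{cor:O1O2},
\[ 0 \longrightarrow \im(\overline{f_*}) \longrightarrow \Chow(\OO) \xrightarrow{\pi} \bigoplus_{i=1}^r \Z/g_i\Z \longrightarrow 0 \:, \]
noting that by Corollary~\ref{cor:O1O2}(1) the isomorphism $\coker(\overline{f_*}) \isom \bigoplus_i \Z/g_i\Z$ identifies $\pi$ with the map reading off the coefficients at $\frakp_1,\dots,\frakp_r$ modulo $g_1,\dots,g_r$; this is consistent since $\ord_{\frakp_i}(a)\in g_i\Z$ for all $a$ by the computation above. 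On the other hand, for each $i$ the assignment $\sum_\frakq a_\frakq\frakq \mapsto a_{\frakp_i}\cdot\frakp_i\OO_{\frakp_i}$ defines a homomorphism $\Div(\OO) \to \Div(\OO_{\frakp_i})$ carrying $\ddiv_\OO(a)$ to $\ddiv_{\OO_{\frakp_i}}(a)$, hence inducing the localization homomorphism $\Chow(\OO) \to \Chow(\OO_{\frakp_i})$. Under the isomorphism $\Z/g_i\Z \isom \Chow(\OO_{\frakp_i})$ of (2) sending $1$ to $[\frakp_i\OO_{\frakp_i}]$, the $i$-th component of $\pi$ becomes exactly this localization map, which is the formula in the statement; this establishes (3).

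For the concluding isomorphism, observe that $\im(\overline{f_*}) = \im(f_*)/\calP(\OO)$ is by construction a subgroup of $\Div(\OO)/\calP(\OO) = \Chow(\OO)$ (using $\calP(\OO)\subseteq\im(f_*)$ from Remark~\ref{rem:fstar}), so the inclusion in the exact sequence is genuinely injective. By Corollary~\ref{cor:O1O2}(2) the kernel of $\overline{f_*}\colon \Chow(\OK) \to \Chow(\OO)$ is $\{[x] : x \in \ker(f_*)\}$, so the first isomorphism theorem gives $\im(\overline{f_*}) \isom \Chow(\OK)/\{[x] : x \in \ker(f_*)\}$; transporting along the isomorphism $\overline{\ddiv}_\OK \colon \Cl(\OK) \ariso \Chow(\OK)$ of Remark~\ref{ChowOK} turns this into $\Cl(\OK)/\{[\frakA] : \ddiv_\OK(\frakA) \in \ker(f_*)\}$, as claimed. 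The only non-formal ingredient in all of this is the surjectivity of the joint valuation map $K^*\to\Z^s$ in the local computation; everything else is bookkeeping around Corollary~\ref{cor:O1O2} and the identification $\Chow(\OK)\isom\Cl(\OK)$.
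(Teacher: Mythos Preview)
Your proposal is correct and follows essentially the same approach as the paper: the paper's proof consists of the single sentence ``Since $\OK_\frakp$ has only finitely many prime ideals, it is a principal ideal domain. The preceding discussion yields the following result,'' and your argument simply spells out what that sentence means---the PID property of $\OK_\frakp$ gives the surjectivity of the joint valuation map, Lemma~\ref{ord} computes $\im(\ord_\frakp)$, and Corollary~\ref{cor:O1O2} together with the displayed exact sequence right before the theorem handles part~(3) and the description of $\im(\overline{f_*})$. There is no substantive difference in strategy, only in the level of detail.
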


The following theorem describes this extension, which is not necessarily split, c.f. Example~\ref{ex:nonsplit}.

\begin{theorem}\label{main}
For $1 \le i \le r$ let $Q_i = \sum_{j=1}^{r_i} \lambda_{i,j}  \frakP_{i,j}\in \Div(\OK)$, where the $\lambda_{i,j}$ are defined in Equation~\eqref{eq:gi}.
\begin{enumerate}
\item
The kernel of $f_*$ is generated by
\[ \{ \tfrac{d_{i,j}}{g_i}Q_i - \frakP_{i,j} \mid 1 \le i \le r,\; 1 \le j \le r_i \} \:. \]
\item
Let $G = \bigoplus_{i=1}^r \Z \frakp_i \oplus \Chow(\OK)/\ker(\overline{f_*})$. Then
\[ \varphi \colon G \to \Chow(\OO),\, (\sum_{i=1}^r a_i \frakp_i, [A]) \mapsto [ \sum_{i=1}^r a_i \frakp_i + f_*(A)) ] \]
is an epimorphism with kernel
\[ R = \langle \{ (g_i\frakp_i, [-Q_i]) \in G \mid 1 \le i \le r \} \rangle \:. \]
In  particular, $G/R \isom \Chow(\OO)$.
\end{enumerate}
\end{theorem}
\begin{proof}
(1) The given generators lie in $\ker(f_*)$.
Conversely, let $D := \sum_{i,j} a_{i,j} \frakP_{i,j} \in \ker(f_*)$.
Then $\sum_j a_{i,j} d_{i,j} = 0$ for all $i$, see Lemma~\ref{lem:O1O2}.
Therefore
\[ D + \sum_{i,j} a_{i,j}(\tfrac{d_{i,j}}{g_i} Q_i - \frakP_{i,j}) = \sum_{i,j} \tfrac{a_{i,j}d_{i,j}}{g_i} Q_i = \sum_i \tfrac{1}{g_i} (\sum_j a_{i,j} d_{i,j}) Q_i = 0 \:.\]
The result follows.

(2)
Lemma~\ref{lem:O1O2} shows that $\varphi$ is a well-defined epimorphism.
The subgroup $R$ is in its kernel since $f_*(Q_i) = g_i \frakp_i$.
Conversely, suppose that $( \sum_{i=1}^r a_i \frakp_i, [A])$ is in the kernel of $\varphi$.
Then $\sum_{i=1}^r a_i \frakp_i$ is in the image of $f_*$. By Lemma~\ref{lem:O1O2} this implies $g_i \mid a_i$ for all $i$.
So $(0, [A  + \sum_{i=1}^r \frac{a_i}{g_i}Q_i])$ is also in the kernel of $\varphi$.
By (1) this is only possible if $A  + \sum_{i=1}^r \frac{a_i}{g_i}Q_i \in \ker(f_*)$, i.e. $[A  + \sum_{i=1}^r \frac{a_i}{g_i}Q_i ] \in G/\ker(\overline{f_*})$ is trivial.
Hence $R$ is the kernel of $\varphi$.
\end{proof}

In the next step we give  an algorithm to detect if a given divisor is principal.

\begin{alg}\label{AlgPrinTest}
Given a divisor $D \in \Div(\OO)$ we can decide if $D$ is principal and if so, find some $\alpha \in K^*$ such that $\ddiv_\OO(\alpha) = D$.
\begin{enumerate}
\item If $D \notin \im(f_*)$ return false.
\item Using Proposition~\ref{Chow21}, find $A \in \Div(\OK)$ such that $f_*(A) = D$.
\item Compute $\Chow(\OK) \isom \Pic(\OK)$.
\item From the factorization of the conductor $\calF$ in $\OK$ obtain the non-invertible maximal ideals of $\OO$ and generators of $\ker(f_*)$, cf. Theorem~\ref{main}.
\item If $[A] \notin \ker(\overline{f_*})$, return false.
\item Pick some $B \in \ker(f_*)$ such that $A+B = \ddiv_\OO(\alpha)$ is principal.
\item Return $\alpha$.
\end{enumerate}
\end{alg}
\begin{proof}
Suppose first $D = \ddiv_\OO(\beta)$ is a principal divisor.
Equation~\eqref{eq:fstar} shows that $D = f_*(\ddiv_\OK(\beta)) \in \im(f_*)$.
The choice of $A$ yields $D = f_*(A)$ and thus $A - \ddiv_\OK(\beta) \in \ker(f_*)$.
Hence there exists some $B \in \ker(f_*)$ such that $A + B$ is principal.
So the element $\alpha$ from step (6) of the algorithm satisfies
\[ \ddiv_\OO(\alpha) = f_*(A) + f_*(B) = D + 0 = D \:. \]
Conversely, if the algorithm returns some element $\alpha \in K^*$, then the same argument as before shows that $D = \ddiv_\OO(\alpha)$ is principal.
So the output is always correct.
\end{proof}

\begin{remark}
Provided that we can compute $\Chow(\OK) \isom \Cl(\OK)$ and the conductor $\calF$ as well as factor $\calF$ into prime ideals, Theorem~\ref{main} yields an effective method to compute $\Chow(\OO)$.
To be able to run Algorithm~\ref{AlgPrinTest}, we also need to check if a given divisor of $\OK$ (or equivalently a given fractional ideal of $\OK$) is principal and if so, compute a generator.
For orders $\OO$ in a global field $K$, there are well known algorithms to solve these tasks.
\end{remark}


We close this section by discussing the canonical homomorphism $\Pic(\OO) \to \Chow(\OO)$.
The homomorphism $\calI(\OO) \to \calI(\OK), \fraka \mapsto \fraka \OK$ maps principal ideals to principal ideals and therefore it induces a canonical homomorphism
\[ \Pic(\OO) \to \Cl(\OK),\; [\fraka] \mapsto [ \fraka \OK] \:. \]
Since every element of  $\Cl(\OK)$ is represented by a product of prime ideals of $\OK$ coprime to $\calF$, the canonical homomorphism
$\Pic(\OO) \to \Cl(\OK)$
is surjective, see also \cite[p. 79]{Neukirch}.

Proposition~\ref{Chow21} shows that the homomorphism $\overline{\ddiv}_\OO \colon \Pic(\OO) \to \Chow(\OO)$ is the composition of the canonical homomorphisms
\[ \Pic(\OO) \to \Cl(\OK) \isom \Chow(\OK) \to \Chow(\OO) \:. \]

\begin{prop}\label{PicChow}
The homomorphism
\[ \overline{\ddiv}_\OO \colon \Pic(\OO) \to \Chow(\OO) \]
is surjective if and only if all local Chow groups of $\OO$ are trivial.
It is injective if and only if $\Pic(\OO) \to \Cl(\OK)$ is an isomorphism and the image of $\ker(\overline{f_*})$ is trivial.
\end{prop}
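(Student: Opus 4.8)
The plan is to analyze the factorization $\overline{\ddiv}_\OO \colon \Pic(\OO) \to \Cl(\OK) \isom \Chow(\OK) \xrightarrow{\overline{f_*}} \Chow(\OO)$ established just before the statement, and to read off injectivity and surjectivity from the behavior of the two outer maps. Surjectivity is the easier half: since $\Pic(\OO) \to \Cl(\OK)$ is surjective (as noted in the excerpt, using that every ideal class of $\OK$ is represented by an ideal coprime to $\calF$), the composite $\overline{\ddiv}_\OO$ is surjective if and only if $\overline{f_*} \colon \Chow(\OK) \to \Chow(\OO)$ is surjective. By Corollary~\ref{cor:O1O2}, $\coker(\overline{f_*}) \isom \coker(f_*) \isom \prod_{i=1}^r \Z/g_i\Z$, which is trivial precisely when every $g_i = 1$, i.e. precisely when every local Chow group $\Chow(\OO_{\frakp_i}) \isom \Z/g_i\Z$ is trivial (Theorem~\ref{Thm:GlobalChow}(2)); the local Chow groups at invertible primes are trivial anyway by Theorem~\ref{Thm:GlobalChow}(1). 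This gives the first equivalence.

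For injectivity, I would argue that $\ker(\overline{\ddiv}_\OO)$ sits in an exact sequence built from the kernels of the two maps in the composite. Write $\pi \colon \Pic(\OO) \to \Cl(\OK)$ for the first (surjective) map and identify $\Cl(\OK)$ with $\Chow(\OK)$ via $\overline{\ddiv}_\OK$. An element $[\fraka] \in \Pic(\OO)$ lies in $\ker(\overline{\ddiv}_\OO)$ iff $\pi([\fraka]) \in \ker(\overline{f_*})$; so $\ker(\overline{\ddiv}_\OO)$ is the preimage under $\pi$ of $\ker(\overline{f_*}) \le \Chow(\OK)$, and since $\pi$ is onto, $\pi$ restricts to a surjection $\ker(\overline{\ddiv}_\OO) \twoheadrightarrow \ker(\overline{f_*})$ with kernel $\ker(\pi)$. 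Hence $\ker(\overline{\ddiv}_\OO)$ is trivial iff both $\ker(\pi) = 0$ (i.e. $\pi \colon \Pic(\OO) \to \Cl(\OK)$ is an isomorphism, being already surjective) and $\ker(\overline{f_*}) = 0$ (i.e. the image of $\ker(\overline{f_*})$ under the identification $\overline{\ddiv}_\OK$ is trivial, which is the phrasing in the statement). This yields the second equivalence.

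The only genuine subtlety is the bookkeeping in the injectivity direction: one must be careful that "the image of $\ker(\overline{f_*})$ is trivial" in the statement refers to $\ker(\overline{f_*}) \subseteq \Chow(\OK)$ transported back to $\Cl(\OK)$ (equivalently, to $\ddiv_\OK^{-1}(\ker(f_*))$ modulo principal ideals, the group $\{[\frakA] \mid \ddiv_\OK(\frakA) \in \ker(f_*)\}$ appearing in Theorem~\ref{Thm:GlobalChow}), so that the two named conditions are exactly $\ker(\pi)=0$ and $\ker(\overline{f_*})=0$. I expect the main obstacle, such as it is, to be purely expository: making precise that the composite of two maps, one of which is surjective, has trivial kernel iff each factor does, and matching that clean statement to the slightly more concrete language used in the proposition. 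No new computation beyond Corollary~\ref{cor:O1O2} and Theorem~\ref{Thm:GlobalChow} is needed.
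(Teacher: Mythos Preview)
Your proposal is correct and follows essentially the same route as the paper: both exploit the factorization $\Pic(\OO) \twoheadrightarrow \Cl(\OK) \isom \Chow(\OK) \xrightarrow{\overline{f_*}} \Chow(\OO)$ and the surjectivity of the first map to reduce surjectivity to that of $\overline{f_*}$ (hence to triviality of the local Chow groups via Corollary~\ref{cor:O1O2} and Theorem~\ref{Thm:GlobalChow}), and injectivity to the simultaneous injectivity of both factors. The paper states the injectivity reduction in one line (``$\overline{\ddiv}_\OO$ is injective if and only if $\Pic(\OO) \to \Cl(\OK)$ and $\Chow(\OK) \to \Chow(\OO)$ are''), whereas you spell out the short exact sequence $0 \to \ker(\pi) \to \ker(\overline{\ddiv}_\OO) \to \ker(\overline{f_*}) \to 0$; this is the same argument, just unpacked.
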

\begin{proof}
Since $\Pic(\OO) \to \Cl(\OK)$ is surjective, the map $\overline{\ddiv}_\OO$ is surjective if and only if $\Chow(\OK) \to \Chow(\OO)$ is.
By Lemma~\ref{lem:O1O2} that holds if and only if $\overline{f_*}$ is surjective.
Theorem~\ref{Thm:GlobalChow} shows that this is equivalent to all local Chow groups being trivial.\\
Similarly, $\overline{\ddiv}_\OO$ is injective if and only if $\Pic(\OO) \to \Cl(\OK)$ and $\Chow(\OK) \to \Chow(\OO)$ are.
So the second assertion follows again from Lemma~\ref{lem:O1O2}.
\end{proof}





\section{Orders in number fields with trivial Chow groups}

We are going to show that number fields often contain orders with trivial Chow group.
We start with a more general result which might be of independent interest,
but we will only use it in the case that $R=\Z$ and $\OK$ is the maximal order of an algebraic number field.
In this case, part (2) can also be deduced from Furtw\"angler's classification of conductors, see \cite{Furtwaengler,Grell,Lettl}.

\begin{lemma}\label{Order_aOO}
Let $R \subsetneq \OK$ be a finite extension of Dedekind rings.
Let $\fraka$ be a non-zero ideal of $R$ and set $\OO = R + \fraka \OK$.
\begin{enumerate}
\item $\OO$ is an $R$-subalgebra of $\OK$ which is finitely generated over $R$ and has the same field of fractions as $\OK$.
\item The conductor of $\OO$ is $\fraka \OK$.
\item Let $\frakp \in \PP(R)$ with $\fraka \subseteq \frakp$.
Then $\frakP:= \frakp + \fraka \OK$ is the unique maximal ideal of $\OO$ over $\frakp$.
It is non-invertible and $\OO / \frakP \isom R / \frakp$.
\end{enumerate}
\end{lemma}
\begin{proof}
Part (1) is clear.

(2) Let $\calF$ be the conductor of $\OO$. Then $\fraka \OK \subseteq \calF$.
To show that $\fraka \OK = \calF$, we may pass to the localizations of $R$ and assume that $R$ is local.
Then $\fraka = fR$ and $\calF \cap R = gR$ for some $f,g \in R$.
From $f \in \calF \cap R = gR$ we find $r \in R$ such that $f = rg$.
Let $x \in \calF$ and write $x = s + f y$ with $s \in R$ and $y \in \OK$.
Then $s \in R \cap \calF = gR$ and $x \in gR + f\OK \subseteq g \OK$. This shows $\calF = g\OK$.
The inclusions
\[ \OO = R + rg \OK = R + \fraka \OK \subseteq R + \calF  = R + g \OK \subseteq \OO  \]
show $R + rg \OK = R + g \OK$ and thus
\[ r \cdot \OO/R = r \cdot (R + g\OK)/R = (R + rg\OK )/R = \OO/R \:. \]
If $r$ was not a unit, Nakayama's lemma gives $\OO/R = 0$ and thus $R = \OO$.
This case is excluded. So $r \in R^*$ and thus $\calF = g\OK = f \OK = \fraka \OK$ as claimed.

(3) $\frakP$ is a proper ideal of $\OO$.
Thus the inclusions $\frakp \subseteq R \cap \frakP \subsetneq R$ imply $\frakp = \frakP \cap R$.
So the epimorphism $R \to \OO/\frakP$ has kernel $\frakp$ and therefore $\OO/\frakP \isom R / \frakp$.
Thus $\frakP$ is maximal and it is non-invertible as $\frakP + \calF = \frakp + \fraka \OK = \frakP$.
Let $\frakQ$ be a maximal ideal of $\OO$ over $\frakp$. Then
\[\frakP^2 = \frakp^2 + \frakp \fraka \OK + \fraka^2 \OK = \frakp^2 + \frakp \calF \subseteq \frakp \OO \subseteq \frakQ \]
implies that $\frakP \subseteq \frakQ$ since $\frakQ$ is prime. Hence $\frakP = \frakQ$.
\end{proof}

For the remainder of this section, let $K$ be an algebraic number field with ring of integers $\OK$.
We will use the previous result with $R=\Z$ to construct orders in $K$ with trivial Chow groups.

\begin{theorem}\label{ChowTrivial}
Suppose $K$ has a subfield $L$ of class number one.
If the relative index $m:= [K:L]$ is coprime to the class number $\# \Cl(\OK)$ then there exists an order in $K$ with trivial Chow group.
\end{theorem}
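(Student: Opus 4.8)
The plan is to exhibit an order $\OO=\Z+\frakA$, for a suitable ideal $\frakA$ of $\OK$, and to make both outer terms of the exact sequence of Theorem~\ref{Thm:GlobalChow}
\[ 0\to\im(\overline{f_*})\to\Chow(\OO)\to\bigoplus_{i}\Chow(\OO_{\frakp_i})\to 0 \]
vanish. Since $\Chow(\OO_{\frakp_i})\isom\Z/g_i\Z$, this means arranging $g_i=1$ for every $i$ together with $\im(\overline{f_*})\isom\Cl(\OK)/N=0$, where $N\le\Cl(\OK)$ is the subgroup of Theorem~\ref{main}. If $\#\Cl(\OK)=1$ (in particular if $K=\Q$) we take $\OO=\OK$, whose Chow group is $\Cl(\OK)=0$ by Remark~\ref{ChowOK}. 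So assume $h:=\#\Cl(\OK)\ge2$; then $m\ge2$ (otherwise $L=K$ and $h=1$), hence $K\ne\Q$, so Lemma~\ref{Order_aO} and Furtw\"angler's theorem are available.

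For the construction, recall that every ideal class of $\OK$ contains infinitely many prime ideals of absolute degree one (Chebotarev density theorem for the Hilbert class field of $K$). Since $\Cl(\OK)$ is finite, choose prime ideals $\frakP^{(1)},\dots,\frakP^{(u)}$ of $\OK$ of absolute degree one whose classes generate $\Cl(\OK)$, lying over pairwise distinct rational primes $\ell_1,\dots,\ell_u$, each unramified in $K$. Put $\mathfrak{l}^{(\nu)}:=\frakP^{(\nu)}\cap\OO_L$, a prime of $\OO_L$ of residue degree one over $\ell_\nu$, and set
\[ \frakA:=\prod_{\nu=1}^u\mathfrak{l}^{(\nu)}\OK,\qquad \OO:=\Z+\frakA. \]
Because $m\ge2$ and $\ell_\nu$ is unramified in $K$, each $\mathfrak{l}^{(\nu)}\OK$ is a product of at least two distinct prime ideals $\frakP_{\nu,1},\dots,\frakP_{\nu,r_\nu}$ of $\OK$, each occurring once (one of them is $\frakP^{(\nu)}$, of residue degree one over $\mathfrak{l}^{(\nu)}$, while the residue degrees sum to $m$). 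A short check with part~(2) of Furtw\"angler's theorem — only the divisors with residue field $\F_{\ell_\nu}$ need to be treated, and any second prime divisor of $\mathfrak{l}^{(\nu)}\OK$ supplies the required inequality — shows $\mathfrak{l}^{(\nu)}\OK$ is a conductor ideal; since the $\mathfrak{l}^{(\nu)}\OK$ have pairwise coprime norms, part~(1) shows $\frakA$ is a conductor ideal too. By Lemma~\ref{Order_aO}, $\OO$ then has conductor exactly $\frakA$, its non-invertible maximal ideals are $\frakp_\nu=\ell_\nu\Z+\frakA$ $(1\le\nu\le u)$ with $\OO/\frakp_\nu\isom\F_{\ell_\nu}$, and the maximal ideals of $\OK$ over $\frakp_\nu$ are exactly $\frakP_{\nu,1},\dots,\frakP_{\nu,r_\nu}$.

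It remains to verify the two vanishing statements. As $\frakP^{(\nu)}$ has residue degree one over $\ell_\nu=\OO/\frakp_\nu$, it is among the $\frakP_{\nu,j}$ with $\deg_{\frakp_\nu}(\frakP^{(\nu)})=1$, so $g_\nu=1$ and every local Chow group is trivial. For $\im(\overline{f_*})$, let $H\le\Cl(\OK)$ be generated by all $[\frakP_{\nu,j}]$; it contains the generators $[\frakP^{(\nu)}]$ of $\Cl(\OK)$, so $H=\Cl(\OK)$. Put $d_{\nu,j}=\deg_{\frakp_\nu}(\frakP_{\nu,j})$ and let $\frakQ_\nu$ be the ideal of Theorem~\ref{main} (with $g_\nu=1$); by Remark~\ref{rem:N1}, $d_{\nu,j}[\frakQ_\nu]-[\frakP_{\nu,j}]\in N$ for all $\nu,j$, so modulo $N$ each $[\frakP_{\nu,j}]$ is a multiple of $[\frakQ_\nu]$, whence $H/N$ is generated by the $u$ classes $[\frakQ_\nu]\bmod N$. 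Now $\mathfrak{l}^{(\nu)}$ is principal (as $\OO_L$ has class number one), hence so is $\mathfrak{l}^{(\nu)}\OK=\frakP_{\nu,1}\cdots\frakP_{\nu,r_\nu}$, giving $\sum_j[\frakP_{\nu,j}]=0$ in $\Cl(\OK)$; and since $d_{\nu,j}$ is the residue degree of $\frakP_{\nu,j}$ over $\mathfrak{l}^{(\nu)}$, with $\sum_j d_{\nu,j}=[K:L]=m$, reducing this relation modulo $N$ gives $m[\frakQ_\nu]\in N$. Thus $H/N$ is generated by $u$ elements each annihilated by $m$, so $\#(H/N)$ divides both $m^u$ and $h$; as $\gcd(m,h)=1$ this forces $H=N$, hence $\im(\overline{f_*})\isom\Cl(\OK)/N=\Cl(\OK)/H=0$. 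By Theorem~\ref{Thm:GlobalChow}, $\Chow(\OO)=0$.

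The main obstacle is controlling the primes $\ell_\nu$ simultaneously: they must be unramified in $K$ and each must carry an absolute-degree-one prime of $\OK$ (this trivialises the local Chow groups), while collectively their prime divisors in $\OK$ must generate $\Cl(\OK)$ (this trivialises $\im(\overline{f_*})$); both are supplied by Chebotarev. The arithmetic hypothesis $\gcd(m,\#\Cl(\OK))=1$ enters only at the last step, converting ``$H/N$ is annihilated by $m$'' into ``$H=N$''.
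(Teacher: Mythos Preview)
Your argument is correct and follows the same essential strategy as the paper: pick absolute-degree-one primes of $\OK$ whose classes generate $\Cl(\OK)$, build an order whose non-invertible primes lie beneath them, observe that the degree-one primes force $g_i=1$ (trivial local Chow groups), and then use principality of the intermediate prime in $L$ together with $\sum_j e_j f_j = m$ to produce an $m$-torsion relation which, combined with $\gcd(m,h)=1$, kills $\im(\overline{f_*})$.

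The one genuine difference is the choice of order. The paper takes $\OO=\Z+(p_1\cdots p_r)\OK$, whose conductor is $p_1\cdots p_r\OK$ automatically by Corollary~\ref{cor:Furtwaengler}; no unramifiedness assumption and no direct appeal to Furtw\"angler's criterion are needed. You instead take the smaller conductor $\frakA=\prod_\nu \mathfrak{l}^{(\nu)}\OK$, which forces you to impose that each $\ell_\nu$ be unramified and to verify Furtw\"angler's condition by hand (your ``short check''). Both work, but the paper's choice is cleaner. On the other hand, your computation of $\im(\overline{f_*})$ via the subgroup $N$ of Theorem~\ref{main} is slightly more systematic than the paper's direct $\ord_{\frakp_i}(a)=m$ calculation; the paper simply writes down a generator $a$ of $\frakP\cap\OO_L$ and reads off $[\frakP]^m\in\ker(\overline{f_*})$ from $f_*(\ddiv_{\OK}(\frakP^m))=m\frakp_i=\ddiv_\OO(a)$, which amounts to the same relation you obtain as $m[\frakQ_\nu]\in N$.
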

\begin{proof} We may assume that $K \ne \Q$.
By \cite[Theorem 1]{Primes} the class group  $\Cl(\OK)$ can be generated by a finite set $\mathcal{P}$ of maximal ideals of $\OK$ of (absolute) degree $1$.
Let $p_1,\dots,p_r$ be the different prime numbers below the ideals in $\mathcal{P}$.
By Lemma~\ref{Order_aOO}, the order $\OO = \Z + (p_1 \cdot \ldots \cdot p_r) \OK$ has conductor $(p_1 \cdot \ldots \cdot p_r) \OK$ and there exists a unique maximal ideal $\frakp_i$ of $\OO$ over $p_i$.
Theorem~\ref{Thm:GlobalChow} therefore implies that $\Chow(\OK_{\frakp_i})$ is trivial and thus $\overline{f_*} \colon \Chow(\OK) \to \Chow(\OO)$ is onto.\\
Let $\OK_L$ be the ring of integers of $L$ and let $\frakP$ be a maximal ideal of $\OK$ over $p_i$ of degree $1$.
Then $\frakP \cap \OK_L$ is principal, generated by $a \in L^*$ say.
So we have the following inclusions
\begin{center}
\begin{tikzcd}
\Z \ar[r, hook] & \OK_L \ar[r, hook] & \OK \\
p_i\Z \ar[r, hook] \ar[u, hook] & a\OK_L \ar[r, hook] \ar[u, hook] & \frakP \ar[u, hook]
\end{tikzcd}
\end{center}
Since $a \OK_L = \frakP \cap \OK_L$ has degree $1$, the integral element $a \in \OK$ has absolute norm $p_i^m$.
Hence $a \in K$ satisfies $\ord_{\frakp_i}(a) = m$ and $\ord_\frakq(a) = 0$ for all $\frakq \in \PP(\OO)$ different from $\frakp_i$.
Thus 
\[ f_*(m \frakP) = m\frakp_i = \ddiv_\OO(a) \overset{\eqref{eq:fstar}}{=} f_*(\ddiv_\OK(a)) \]
and 
$ [ \frakP]^m = [\frakP^{m} / a] $
lies in the kernel of $\Cl(\OK) \isom \Chow(\OK) \to \Chow(\OO)$. Since $m$ is coprime to the class number of $\OK$, we conclude that $[\frakP]$ itself lies in the kernel.
Since the maximal ideals of $\OK$ of degree 1 over the primes $p_1,\dots,p_r$ generate the class group of $\OK$, Theorem~\ref{Thm:GlobalChow} shows that $\Chow(\OO) = \im(\overline{f_*})$ is trivial.
\end{proof}

If we set $L = \Q$ in Theorem \ref{ChowTrivial} we get the following result.

\begin{corollary}\label{ChowTrivialQ}
Let $K$ be a number field of degree $n$.
If $n$ is coprime to the class number $\# \Cl(\OK)$ then $K$ contains an order $\OO$ with trivial Chow group.
\end{corollary}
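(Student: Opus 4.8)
The plan is to obtain this as a direct specialization of Theorem~\ref{ChowTrivial} to the subfield $L = \Q$. First I would verify the two hypotheses of that theorem for this choice. The field $\Q$ has class number one, since $\Z$ is a principal ideal domain, so $\Q$ is an admissible choice of $L$. The relative index is then $m = [K : \Q] = n$, which by assumption is coprime to $\#\Cl(\OK)$. With both hypotheses satisfied, Theorem~\ref{ChowTrivial} produces an order $\OO \subseteq K$ with $\Chow(\OO) = 0$, which is exactly the claim.

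For concreteness I would also unwind what the order looks like: following the proof of Theorem~\ref{ChowTrivial}, one applies \cite[Theorem 1]{Primes} to get primes $p_1,\dots,p_r$ such that the degree-one maximal ideals of $\OK$ above them generate $\Cl(\OK)$, and then $\OO = \Z + (p_1 \cdots p_r)\OK$ does the job. In the degenerate case $K = \Q$ we have $n = 1$ and $\#\Cl(\OK) = 1$, so the coprimality hypothesis holds trivially; here $r = 0$ and the construction degenerates to $\OO = \Z = \OK$, whose Chow group is trivial because $\Chow(\Z) \isom \Cl(\Z) = 0$ by Remark~\ref{ChowOK}. Since the corollary is literally an instance of the preceding theorem, I do not expect any real obstacle; all the substantive work — generating $\Cl(\OK)$ by degree-one primes and showing that the classes of those primes die in $\Chow(\OO)$ via the norm computation $\ord_{\frakp_i}(a) = m$ — has already been carried out in the proof of Theorem~\ref{ChowTrivial}, and nothing new is needed beyond the observation that $\Q$ is a class-number-one subfield of every number field.
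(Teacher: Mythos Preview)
Your proposal is correct and matches the paper's own proof, which consists of the single sentence ``If we set $L = \Q$ in Theorem~\ref{ChowTrivial} we get the following result.'' Your additional remarks on the explicit form of $\OO$ and the degenerate case $K = \Q$ go slightly beyond what the paper writes, but they are accurate and harmless.
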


The next example shows that we cannot remove the coprimality condition in Corollary~\ref{ChowTrivialQ}.

\begin{example} 
Let $\OO$ be an order in a quadratic number field $K$ with even class number.
We claim that the Chow group of $\OO$ cannot be trivial. 
If $\OO = \OK$ is maximal, then $\Chow(\OO) \isom \Cl(\OK)$ is not trivial.
Suppose now $ \OO \ne \OK$. Then $\OO = \Z + a \OK$ where $a$ denotes the index of $\OO$ in $\OK$.
If $a$ has a prime divisor $p$ which is inert in $K$, then $\Chow(\OO_p) \isom \Z/2\Z$ implies that $\Chow(\OO)$ is not trivial.
So we may assume that all the prime divisors $p_1,\dots,p_r$ of $a$ are ramified or split in $K$.
Then all local Chow groups of $\OO$ are trivial.
For $1 \le i \le r$ let $\frakP_i$ be a prime ideal of $\OK$ over $p_i$ and denote the non-trivial automorphism of $K$ by $\overline{\phantom{X}}$.
Theorem~\ref{Thm:GlobalChow} shows that
\begin{align*}
\Chow(\OO) &\isom 
\Cl(\OK) / \langle \{ [\frakP_i \overline{\frakP_i}^{-1}] \mid 1 \le i \le r,\; p_i \mbox{ splits in $K$} \} \rangle \\
&= \Cl(\OK) / \langle \{ [\frakP_i^2] \mid 1 \le i \le r,\; p_i \mbox{ splits in $K$} \} \rangle
\end{align*}
is not trivial.
\end{example}

\begin{example}
Let $p>2$ be prime and set $K = \Q(\sqrt{p^*})$ where $p^* = (-1)^{(p-1)/2} p$.
The discriminant of $K$ is $p^*$. Gau\ss{}' genus theory implies that $\Cl(\OK)$ is odd and Corollary~\ref{ChowTrivialQ} shows that $K$ contains an order $\OO$ with trivial Chow group.
\end{example}

The previous example shows the following result.

\begin{theorem}\label{chow1}
There are infinitely many (quadratic) number fields that contain orders with trivial Chow group.
\end{theorem}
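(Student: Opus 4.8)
The plan is to exhibit an explicit infinite family of quadratic number fields, each of which contains an order with trivial Chow group, and to verify the hypotheses of Corollary~\ref{ChowTrivialQ} for every member of the family. The natural candidates are the real and imaginary quadratic fields $K = \Q(\sqrt{p^*})$ with $p^* = (-1)^{(p-1)/2}p$ for $p$ an odd prime, exactly as in the example immediately preceding the statement. These fields are pairwise distinct (their discriminants $p^*$ are distinct, being, up to sign, distinct primes), so there are infinitely many of them; this takes care of the ``infinitely many'' part.

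Next I would check the coprimality condition of Corollary~\ref{ChowTrivialQ}. Here $n = [K:\Q] = 2$, so I must show that $\#\Cl(\OK)$ is odd for every such $K$. The discriminant of $K = \Q(\sqrt{p^*})$ is precisely $p^*$ (since $p^* \equiv 1 \pmod 4$), so the discriminant has exactly one prime divisor. By Gau\ss's genus theory, the $2$-rank of $\Cl(\OK)$ for a quadratic field equals $t-1$, where $t$ is the number of prime divisors of the discriminant; here $t = 1$, so the $2$-rank is $0$, i.e.\ $\Cl(\OK)$ has odd order. Hence $n = 2$ is coprime to $\#\Cl(\OK)$, and Corollary~\ref{ChowTrivialQ} produces an order $\OO \subseteq \OK$ with trivial Chow group.

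Assembling these two observations gives the theorem: the family $\{\Q(\sqrt{p^*}) : p \text{ an odd prime}\}$ is infinite, and each member contains an order with trivial Chow group. Since the bulk of the work — the construction of the order and the proof that its Chow group vanishes — is already packaged into Corollary~\ref{ChowTrivialQ} (and ultimately Theorem~\ref{ChowTrivial} and Theorem~\ref{Thm:GlobalChow}), the only genuinely new ingredient is the genus-theory input guaranteeing odd class number, and that is classical. The main (very mild) obstacle is simply making sure the fields are genuinely distinct and that $p^* \equiv 1 \pmod 4$ so that $\disc K = p^*$ has a single prime divisor; both are immediate. In short, the proof is the remark ``the previous example shows the following result,'' spelled out: the example already does all of this for each fixed $p$, and letting $p$ range over the infinitely many odd primes yields infinitely many such fields.
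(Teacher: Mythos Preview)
Your proposal is correct and follows exactly the same route as the paper: the paper's proof is literally the sentence ``the previous example shows the following result,'' where the preceding example applies Gau\ss{}' genus theory to $K=\Q(\sqrt{p^*})$ to obtain odd class number and then invokes Corollary~\ref{ChowTrivialQ}. You have simply unpacked that example and added the (immediate) observation that the fields $\Q(\sqrt{p^*})$ are pairwise distinct as $p$ varies over the odd primes.
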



The result of Theorem \ref{chow1} is a little bit surprising because for the ordinary class group it is not known that there are infinitely many number fields with class number 1. This is the famous class number 1 problem.
 
As soon as we find a number field, where the class number is coprime to the degree, we can find an order with trivial Chow group. The following example shows that we might be lucky in more cases.

\begin{example}
The maximal order $\OK$ of
\[ K = \Q[X]/(X^6 - X^5 + 25X^4 - 30X^3 + 603X^2 - 648X + 729) \] has class group isomorphic to $\Z/6\Z \times \Z/2\Z$.
The field $K$ has two non-trivial subfields $K_1 = \Q(\sqrt{-3})$ and $K_2 = \Q[X]/(X^3 - X^2 - 24X + 27)$; both with trivial class groups.
We claim that  $\OO:= \Z + 7 \OK$ has trivial Chow group.
This can be seen as follows. The prime $7$ splits completely in $\OK$ and thus $\overline{f_*}$ is onto.
Moreover, the prime ideals of $\OK$ over $7$ generate $\Cl(\OK)$.
Using $L=K_1$ ($L=K_2$) in the proof of Theorem~\ref{ChowTrivial}, we see that all squares (cubes) of $\Cl(\OK)$ lie in the kernel of $\Cl(\OK) \isom \Chow(\OK) \to \Chow(\OO)$.
But then  $\Chow(\OO) = \im(\overline{f_*})$ is trivial.
\end{example}

We close this section by showing that the number of orders in $K$ with trivial Chow group is either zero or infinite.

\begin{lemma}
If $K$ contains an order with trivial Chow group, then there exist infinitely many such orders in $K$.
\end{lemma}
\begin{proof}
Let $\OO'$ be an order in $K$ has trivial Chow group and set $m = [ \OK : \OO' ]$.
By Chebotarev's Density Theorem, there exist infinitely many primes $p$ coprime to $m$ which completely split in $K$.
Hence it suffices to show that $\OO := \OO' \cap (\Z + p \OK)$ has trivial Chow group.
Let $q$ be a prime divisor of $m$ and let $\frakq$ be a prime ideal of $\OO$ over $q$.
Localizing at $\frakq$ shows that there exists a unique prime ideal $\frakQ$ of $\OO'$ over $\frakq$ and $\OO/\frakq \isom \OO'/\frakQ$.
By Lemma~\ref{Order_aOO} the order $\Z + p \OK$ only has one prime ideal over $p$ and it is of degree $1$.
The same must hold for the suborder $\OO$.
Lemma~\ref{lem:O1O2} shows that the induced morphism $\overline{f_*} \colon \Chow(\OO') \to \Chow(\OO)$ is surjective.
Hence $\Chow(\OO)$ is trivial.
\end{proof}

\section{Further examples}

\begin{example}\label{ex:nonsplit}
The maximal order $\OK$ of $K:= \Q(\sqrt{-3}, \sqrt{13})$ has class number $2$.
Lemma~\ref{Order_aOO} shows that the order $\OO = \Z + 2\OK$ has a unique maximal ideal $\frakp$ over $2$.
Then $2\OK = \frakP \frakQ$ with maximal ideals $\frakP$ and $\frakQ$ which both generate the class group of $\OK$.
Since $\frakP$ and $\frakQ$ have degree $2$ over $\frakp$, the local Chow group $\Chow(\OO_\frakp)$ has order $2$.
Theorem~\ref{Thm:GlobalChow} shows 
\[ \im(\overline{f_*}) \isom \Cl(\OK) / \langle [\frakP/\frakQ] \rangle = \Cl(\OK) / \langle [\frakP^2] \rangle \isom \Cl(\OK) \isom \Z/2\Z \:. \]
So $ \Chow(\OO)$ is an extension of $\Z/2\Z$ by $\Z/2\Z$, but it is non-split, since Theorem~\ref{main} shows that $\Chow(\OO) \isom \Z/4\Z$ is generated by $\frakp = \frakP \cap \OO = \frakQ \cap \OO$.
\end{example}

\begin{example}\label{ex:NeitherNor}
The maximal order $\OK$ of
\[ K = \Q[X]/(X^5 - X^3 - X^2 + X + 1) \]
has trivial class group and
\[ 7 \OK = \frakP_1 \frakP_2 \]
splits into two maximal ideals with $[\OK: \frakP_1] = 7^2$ and $[\OK: \frakP_2] = 7^3$. 
The order
\[ \OO_1 := \Z + \frakP_1 \] 
has conductor $\frakP_1$ and index $7$ in $\OK$.
The ideal $\frakP_1$ is the unique non-invertible maximal ideal of $\OO_1$ and $\OO_1 / \frakP_1 \isom \F_{7}$.
Thus Theorem~\ref{Thm:GlobalChow} shows that 
\[ \Chow(\OO_1) \isom \Z/2\Z \]
is generated by $[\frakP_1]$.
Similarly, the order
\[ \OO_2 := \Z + \frakP_2 \]
has conductor $\frakP_2$ and index $7^2$ in $\OK$.
Again, $\frakP_2$ is the unique non-invertible maximal ideal of $\OO_2$ and $\OO_2 / \frakP_2 \isom \F_{7}$ implies that
\[ \Chow(\OO_2) \isom \Z/3\Z \]
is generated by $[\frakP_2]$.
The intersection $\OO':= \OO_1 \cap \OO_2$ has conductor $7\OK$ and index $7^3$ in $\OK$.
It has two different maximal ideals over $7$, namely $\frakp_i:= \OO' \cap \frakP_i$ for $i=1,2$.
Theorem~\ref{Thm:GlobalChow} shows that
\[ \Chow(\OO') \isom \Chow( \OO'_{\frakp_1} ) \times \Chow( \OO'_{\frakp_1} ) \isom \Z/2\Z \times \Z/3\Z \isom \Z/6\Z \]
is generated by $[\frakp_1]$ and $[\frakp_2]$.
Finally, the order $\OO:= \Z + 7 \OK$ also has conductor $7\OK$ but index $7^4$ in $\OK$.
By Lemma~\ref{Order_aOO} it only has a single maximal ideal $\frakp$ over $7$.
Theorem~\ref{Thm:GlobalChow} and the fact that $\OO/\frakp \isom \F_{7}$ show that $\Chow(\OO)$ must be trivial.\\
In particular, the Chow group homomorphism $\overline{f_*}$ from Proposition~\ref{Chow21} is in general neither injective nor surjective.
\end{example}

\section{A maximality condition}

Let $\OO$ be a one-dimensional noetherian domain with field of fractions $K$ and normalization $\OK$.
We assume that $\OK$ is finitely generated over $\OO$.
In \cite{MaximalI}, the author gives necessary and sufficient conditions when $\OO$ is normal in terms of the injectivity of some canonical homomorphisms.
Unfortunately, most results are wrong, see Example \ref{ex:inj}.
Since we already discussed the injectivity of some of these homomorphisms in Section~\ref{Sec:Chow}, we decided to correct the main results of \cite{MaximalI}.

%
As in Section~\ref{Sec:Chow}, let $\frakp_1,\dots,\frakp_r$ be the non-invertible maximal ideals of $\OO$ and let $\frakP_{i,1},\dots,\frakP_{i,r_i}$ be the maximal ideals of $\OK$ over $\frakp_i$.
Further let $\calF$ be the conductor of $\OO$ in $\OK$.

\begin{theorem}\label{prop:Fix}
The following statements are equivalent:
\begin{enumerate}
\item The canonical epimorphism $\calI(\OO) \to \calI(\OK)$ is injective.
\item The canonical epimorphism $\Pic(\OO) \to \Cl(\OK)$ is injective and $\OK^* = \OO^*$.
\item $(\OK/\calF)^* = (\OO/\calF)^*$.
\item $(\OK/\calF)^*$ is trivial.
\item The following conditions hold:
\begin{itemize}
\item $\calF$ is squarefree, i.e. $ \calF = \prod_{i=1}^r \prod_{j=1}^{r_i} \frakP_{i,j}$.
\item $\OK/\frakP_{i,j} \isom \F_2$ for all $1 \le i \le r$ and $1 \le j \le r_i$.
\item $r_i \ge 2$ for all $1 \le i \le r$.
\end{itemize}
\end{enumerate}
\end{theorem}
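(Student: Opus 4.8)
My plan is to split the five conditions into two blocks: $(1)\Leftrightarrow(2)\Leftrightarrow(3)$, which I would deduce from two exact sequences, and $(3)\Leftrightarrow(4)\Leftrightarrow(5)$, which I would settle by examining the finite ring extension $\OO/\calF\subseteq\OK/\calF$. For the first block I would use, on the one hand, the classical exact sequence attached to the conductor square (the cartesian square with vertices $\OO,\OK,\OO/\calF,\OK/\calF$); since $\OO/\calF$ and $\OK/\calF$ are semilocal, their Picard groups vanish and one obtains
\[ 1 \to \OO^* \to \OK^* \times (\OO/\calF)^* \to (\OK/\calF)^* \to \Pic(\OO) \to \Cl(\OK) \to 1 \:, \]
and, as the ring map $\OO/\calF\hookrightarrow\OK/\calF$ is injective, this rewrites as
\[ 1 \to \OO^* \to \OK^* \xrightarrow{\;\psi\;} (\OK/\calF)^*/(\OO/\calF)^* \to \Pic(\OO) \to \Cl(\OK) \to 1 \]
with $\ker\psi=\OO^*$ and $\coker\psi=\ker(\Pic(\OO)\to\Cl(\OK))$. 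On the other hand, applying the snake lemma to the obvious morphism from $1\to K^*/\OO^*\to\calI(\OO)\to\Pic(\OO)\to1$ to $1\to K^*/\OK^*\to\calI(\OK)\to\Cl(\OK)\to1$ (whose first column is onto with kernel $\OK^*/\OO^*$) gives
\[ 0 \to \OK^*/\OO^* \to \ker(\calI(\OO)\to\calI(\OK)) \to \ker(\Pic(\OO)\to\Cl(\OK)) \to 0 \:. \]
From the second sequence, $\calI(\OO)\to\calI(\OK)$ is injective iff $\OK^*=\OO^*$ and $\Pic(\OO)\to\Cl(\OK)$ is injective, which is $(1)\Leftrightarrow(2)$; from the first, since $\OO^*\subseteq\OK^*$ always, $\OK^*=\OO^*$ means $\psi=0$ while injectivity of $\Pic(\OO)\to\Cl(\OK)$ means $\psi$ is onto, and both hold together precisely when the target $(\OK/\calF)^*/(\OO/\calF)^*$ is trivial, which is $(2)\Leftrightarrow(3)$.

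For $(3)\Leftrightarrow(4)$, the implication $(4)\Rightarrow(3)$ is immediate since $(\OO/\calF)^*$ is a subgroup of $(\OK/\calF)^*$. For $(3)\Rightarrow(4)$ write $S=\OO/\calF\subseteq R=\OK/\calF$; condition $(3)$ says $R^*\subseteq S$. I would use three observations: (i) the conductor of $S$ in $R$ is zero, since $x\OK\subseteq\OO+\calF=\OO$ forces $x\in\calF$ by definition of the conductor; (ii) the primes of $\OK$ containing $\calF$ are precisely the $\frakP_{i,j}$, so by the Chinese remainder theorem $R\isom\prod_{i,j}\OK/\frakP_{i,j}^{c_{i,j}}$ is a finite product of local Artinian rings $R_{i,j}$; (iii) the subring $A\subseteq R$ generated by $R^*$ is contained in $S$ and projects onto each $R_{i,j}$, because the projection of $R^*=\prod_{i,j}R_{i,j}^*$ to $R_{i,j}$ is all of $R_{i,j}^*$, and a subring of a local ring containing all units contains $1+\mathfrak{m}$, hence $\mathfrak{m}$, hence the whole ring. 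If now some $R_{i_0,j_0}^*$ were nontrivial, I would pick $u\in R^*$ equal to a nontrivial unit in the $(i_0,j_0)$-coordinate and to $1$ elsewhere; then $0\ne u-1\in A$ is supported in that single coordinate, and (iii) yields $(u-1)R=(u-1)R_{i_0,j_0}\subseteq A\subseteq S$, so $u-1$ lies in the conductor of $S$ in $R$ — contradicting (i). Hence $R^*=1$, which is $(4)$.

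For $(4)\Leftrightarrow(5)$: from $R^*\isom\prod_{i,j}(\OK/\frakP_{i,j}^{c_{i,j}})^*$ and the fact that $(\OK/\frakP^c)^*$ is trivial exactly when $c=1$ and $\OK/\frakP\isom\F_2$, condition $(4)$ is equivalent to the first two bullets of $(5)$. It then remains to observe that those two conditions already force $r_i\ge2$: if $r_i=1$, the unique prime $\frakP$ of $\OK$ over $\frakp_i$ has $\OK/\frakP\isom\F_2$, hence $\OO/\frakp_i\isom\F_2$; the localization $\OK_{\frakp_i}$ is then a discrete valuation ring with residue field $\F_2$ and maximal ideal $\mathfrak{m}=\calF\OK_{\frakp_i}\subseteq\OO_{\frakp_i}$, and $\OO_{\frakp_i}/\mathfrak{m}=\OK_{\frakp_i}/\mathfrak{m}$ forces $\OO_{\frakp_i}=\OK_{\frakp_i}$, contradicting that $\frakp_i$ is not invertible. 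The reverse implication $(5)\Rightarrow(4)$ is clear, since then $\OK/\calF\isom\prod_{i,j}\F_2$ has trivial unit group.

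The step I expect to be the main obstacle is $(3)\Rightarrow(4)$. The tempting shortcut — that $(\OO/\calF)^*=(\OK/\calF)^*$ should force $\OO/\calF=\OK/\calF$, hence $\OO=\OK$ — is false, as the diagonal inclusion $\F_2\hookrightarrow\F_2\times\F_2$ shows, so the argument must genuinely combine the product decomposition of $\OK/\calF$ into local factors with the vanishing of the conductor of $\OO/\calF$ inside $\OK/\calF$. Everything else is formal once the two exact sequences and the elementary ring-theoretic facts above are in place.
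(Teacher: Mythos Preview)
Your proof is correct. For $(1)\Leftrightarrow(2)\Leftrightarrow(3)$ you and the paper agree in substance: the paper does $(1)\Leftrightarrow(2)$ by a short element chase rather than the snake lemma, and both invoke the same exact sequence
\[
1\to\OK^*/\OO^*\to(\OK/\calF)^*/(\OO/\calF)^*\to\Pic(\OO)\to\Cl(\OK)\to 1
\]
for $(2)\Leftrightarrow(3)$.

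The genuine divergence is in the hard direction $(3)\Rightarrow(5)$. The paper bypasses $(4)$ and argues directly: for $\pi$ in the radical $\prod_{i,j}\frakP_{i,j}$ and any $x\in\OK$, the element $1+\pi x$ is a unit in $\OK/\calF$, hence by $(3)$ already lies in $\OO/\calF$, so $\pi x\in\OO$ and therefore $\pi\in\calF$; this shows $\calF$ is squarefree in one stroke, after which a localization at each $\frakp_i$ (together with the identity $\frakp_i=\prod_j\frakP_{i,j}$) yields the residue-field condition and, via Nakayama, the bound $r_i\ge 2$. Your route instead isolates $(4)$ as an intermediate step via the conductor-of-$S$-in-$R$ argument (which is correct, though the inclusion $(u-1)R\subseteq A$ deserves one more sentence: for $r\in R$ choose $a\in A$ with the same $(i_0,j_0)$-component, and use that $u-1$ kills all other components), and then reads off the three bullets from the product decomposition of $\OK/\calF$. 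The paper's $1+\pi x$ trick is shorter; your argument is more structural and makes explicit why the triviality of $(\OK/\calF)^*$, rather than the mere equality with $(\OO/\calF)^*$, is the real content --- a point you rightly flag as the main obstacle.
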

\begin{proof}
We first show that (1) implies (2). So suppose $\calI(\OO) \to \calI(\OK)$ is injective.
If $u \in \OK^*$, then the fractional principal ideal $u\OO$ is in the kernel of $\calI(\OO) \to \calI(\OK)$. Hence $u \in \OO^*$ and therefore $\OO^* = \OK^*$.
Suppose now $[\fraka]$ is in the kernel of $\Pic(\OO) \to \Cl(\OK)$.
Then $\fraka \OK = \alpha \OK$ for some $\alpha \in K^*$. Hence $\alpha^{-1} \fraka$ is in the kernel of $\calI(\OO) \to \calI(\OK)$.
This implies $\alpha^{-1} \fraka = \OO$. Hence $[\fraka] = [\OO]$.\\
To show that (2) implies (1), suppose $\Pic(\OO) \to \Cl(\OK)$ is injective and $\OO^* = \OK^*$.
If $\fraka$ is in the kernel of $\calI(\OO) \to \calI(\OK)$, then $[\fraka]$ is in the kernel of $\Pic(\OO) \to \Cl(\OK)$.
This shows that $\fraka = \alpha\OO$ for some $\alpha \in K^*$.
Then $\OK = \fraka \OK = \alpha \OK$ shows that $\alpha \in \OK^* = \OO^*$. Hence $\fraka = \OO$.\\
The equivalence of (2) and (3) is immediate from the exact sequence
\begin{equation}\label{eq:ES}
1 \to \OK^*/\OO^* \to (\OK/\calF)^*/(\OO/\calF)^* \to \Pic(\OO) \to \Cl(\OK) \to 1,
\end{equation}
see for example \cite[Proposition 12.9]{Neukirch}. 
The Chinese Remainder Theorem shows that (5) implies (4) which clearly implies (3).
So it remains to show that (3) implies (5).
For $x \in \OK$ and $\pi \in \prod_{i=1}^r \prod_{j=1}^{r_i} \frakP_{i,j}$ the ideal $(1 + \pi x) \OK$ is coprime to $\calF$ and thus 
\[ (1 + \pi x) + \calF \in (\OK/\calF)^* \overset{(2)}{=} (\OO/\calF)^* \:. \]
This implies $\pi x \in \OO$ and therefore $\pi \OK \subseteq \OO$.
So $\calF = \prod_{i=1}^r \prod_{j=1}^{r_i} \frakP_{i,j} $ is squarefree.
The inclusion $\frakp_i \subseteq \frakP_{i,j}$ shows that $\frakp_i \subseteq \bigcap_{j=1}^{r_i} \frakP_{i,j} = \prod_{j=1}^{r_i} \frakP_{i,j}$.
Conversely, $\prod_{j=1}^{r_i} \frakP_{i,j} \OO_{\frakp_i} = \calF \OO_{\frakp_i}$ is a proper ideal of the local ring $\OO_{\frakp_i}$ and thus $\frakp_{i} = \prod_{j=1}^{r_i} \frakP_{i,j}$.
Assertion (3) implies 
\[ 
(\OO_{\frakp_i}/\frakp_i \OO_{\frakp_i})^*
= (\OO_{\frakp_i}/\calF \OO_{\frakp_i})^* 
= (\OK_{\frakp_i}/\calF \OK_{\frakp_i})^* 
\isom \prod_{j=1}^{r_i} (\OK_{\frakP_{i,j}} / \frakP_{i,j})^*  \:.
\]
Hence $\deg_{\frakp_i}(\frakP_{i,j}) = 1$ for all $i,j$ and furthermore $r_i = 1$ or $\OO/\frakp_i \isom \F_2$.
If $r_i = 1$, then $\frakp_i = \frakP_{i,1}$ and thus $\OK/\frakp_i = \OK/\frakP_{i,1} \isom \OO/\frakp_i$ implies $\OK  = \OO + \frakp_i$.
Localizing at $\frakp_i$ and using Nakayama's Lemma shows $\OK_{\frakp_i} = \OO_{\frakp_i}$, which is impossible.
\end{proof}

\begin{example}\label{ex:inj}
Let $K = \Q(\sqrt{-7})$, $\OO = \Z[\sqrt{-7}]$ and $\OK = \Z[\frac{1+\sqrt{-7}}{2}]$ as in Example~\ref{ex:root7}.
It is well known that $\Cl(\OK)$ is trivial and clearly $\OO^* = \OK^* = \{\pm 1\}$.
The order $\OO$ has conductor $2\OK = \frakP \cdot \overline{\frakP}$ where $\frakP$ and $\overline{\frakP}$ are the two prime ideals of $\OK$ of norm~$2$.
Moreover, $2\OK$ is the unique prime ideal of $\OO$ over $2$. Thus
$(\OO/2\OK)^* \isom \F_2^*$ and $(\OK/2\OK) \isom (\OK/\frakP)^* \times (\OK/\overline{\frakP})^* \isom \F_2^* \times \F_2^*$ are both trivial.
Theorem~\ref{prop:Fix} shows that $\Pic(\OO) \isom \Pic(\OK)$ is also trivial and $\calI(\OO) \to \calI(\OK)$ is injective.
Finally, $\Chow(\OO)$ is trivial by Theorem~\ref{main}.
In particular, the maps $\calI(\OO) \to \calI(\OK)$, $\Pic(\OO) \to \Pic(\OK)$ and $\Pic(\OO) \to \Chow(\OO)$ are all injective.
This contradicts the wrong Theorems 2.1, 4.1 and 4.2 of \cite{MaximalI}.
\end{example}

Note that we can find further counterexamples whenever we have two different prime ideals of norm $2$ in $\OK$.


Theorem~\ref{Thm:divinj} below is \cite[Theorem 3.1]{MaximalI}. Since the proof presented there relies on the wrong statement \cite[Theorem 2.1]{MaximalI}, we decided to give a proof here.
We start by recalling a well known fact.

\begin{lemma}\label{lem:IIepi}
The canonical homomorphism
\[ \calI(\OO) \to \calI(\OK),\; \fraka \mapsto \fraka \OK \]
is surjective.
\end{lemma}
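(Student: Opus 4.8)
The plan is to show that every fractional ideal $\frakA$ of $\OK$ arises as $\fraka\OK$ for some invertible fractional ideal $\fraka$ of $\OO$, and the natural candidate is $\fraka := \frakA \cap \OO'$ for a suitable over-order, or more directly, a fractional ideal of $\OO$ that is locally principal at every maximal ideal of $\OO$. The key observation is that invertibility of an $\OO$-ideal is a local condition (as recalled in the excerpt, with reference to \cite[Satz (12.4)]{Neukirch}), so it suffices to build $\fraka$ that is principal at each $\frakp\in\PP(\OO)$ and satisfies $\fraka\OK = \frakA$.

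First I would reduce to the case where $\frakA$ is an integral ideal coprime to the conductor $\calF$: since $\calI(\OK)$ is a free abelian group on $\PP(\OK)$, and since we already know (see the discussion before Proposition~\ref{PicChow}) that $\Cl(\OK)$ is generated by the classes of prime ideals coprime to $\calF$, it is enough to treat prime ideals $\frakP\in\PP(\OK)$ coprime to $\calF$, together with principal ideals $\alpha\OK$ which are trivially hit by $\alpha\OO$. For a prime ideal $\frakP$ of $\OK$ coprime to $\calF$, recall from the excerpt (the discussion following the definition of $\calF$, referencing \cite[Proposition I.12.10]{Neukirch}) that $\frakp := \frakP\cap\OO$ is an invertible maximal ideal of $\OO$ with $\frakp\OK = \frakP$ the unique maximal ideal of $\OK$ over it, and $\OO_\frakp \isom \OK_{\frakP}$. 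Thus $\fraka := \frakp$ is an invertible fractional ideal of $\OO$ with $\fraka\OK = \frakP$, which handles the prime generators.

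The remaining step is to assemble these: given an arbitrary $\frakA = \prod_k \frakP_k^{n_k} \cdot \alpha\OK$ with the $\frakP_k$ coprime to $\calF$, set $\fraka := \alpha\prod_k \frakp_k^{n_k}$ where $\frakp_k = \frakP_k\cap\OO$. Each $\frakp_k$ is invertible, the product of invertible ideals is invertible, and multiplying by $\alpha\in K^*$ preserves invertibility, so $\fraka\in\calI(\OO)$. Using $\frakp_k\OK = \frakP_k$ and the fact that extension of ideals to $\OK$ is multiplicative on the submonoid generated by ideals coprime to $\calF$ (which one checks locally, or by noting $\frakp_k\OK\cdot\frakp_l\OK = (\frakp_k\frakp_l)\OK$ since both equal $\frakP_k\frakP_l$), one gets $\fraka\OK = \alpha\prod_k\frakP_k^{n_k} = \frakA$.

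The main obstacle is the bookkeeping in the reduction step: one must be careful that $\fraka := \alpha\prod_k\frakp_k^{n_k}$ is genuinely a fractional $\OO$-ideal rather than something defined only up to the conductor, and that $\fraka\OK$ really equals $\frakA$ on the nose and not merely up to ideals supported at primes dividing $\calF$. This is clean precisely because we have arranged all the $\frakP_k$ to be coprime to $\calF$, so that extension and contraction are inverse bijections between $\OK$-ideals coprime to $\calF$ and invertible $\OO$-ideals coprime to $\calF$; the principal part $\alpha\OK$ contributes no trouble since it is the image of the principal $\OO$-ideal $\alpha\OO$. Once surjectivity is established for generators, multiplicativity of $\fraka\mapsto\fraka\OK$ finishes the argument.
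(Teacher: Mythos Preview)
Your argument is correct. Both your proof and the paper's rest on the same key fact---that every ideal class in $\Cl(\OK)$ has a representative coprime to the conductor $\calF$---but the paper packages this more tersely: it simply invokes the already-established surjectivity of $\Pic(\OO)\to\Cl(\OK)$ to produce some $\fraka\in\calI(\OO)$ with $[\fraka\OK]=[\frakA]$, and then scales by the appropriate $\alpha\in K^*$ to get $(\alpha\fraka)\OK=\frakA$. Your version unfolds that citation: you explicitly write $\frakA=\alpha\prod_k\frakP_k^{n_k}$ with the $\frakP_k$ coprime to $\calF$, contract each $\frakP_k$ to the invertible $\frakp_k=\frakP_k\cap\OO$, and verify multiplicativity of extension. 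The advantage of your route is that it is self-contained and makes the preimage explicit; the advantage of the paper's is brevity, since the Picard surjectivity was already on the record. Either way, the ``bookkeeping obstacle'' you flag is not a real issue, since extension $\fraka\mapsto\fraka\OK$ is a homomorphism on all of $\calI(\OO)$ (check $(\fraka\frakb)\OK=(\fraka\OK)(\frakb\OK)$ directly, and inverses follow from $\frakp_k\frakp_k^{-1}=\OO$).
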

\begin{proof}
Let $\frakA \in \calI(\OK)$.
The surjectivity of $\Pic(\OO) \to \Cl(\OK)$ yields some $\fraka \in \calI(\OO)$ such that $[\frakA] = [\fraka \OK] \in \Cl(\OK)$.
Hence $\frakA = (\alpha \fraka) \OK$ for some $\alpha \in K^*$.
\end{proof}

\begin{theorem}\label{Thm:divinj}
The canonical homomorphism 
\[ \ddiv_\OO \colon \calI(\OO) \to \Div(\OO) \]
is injective if and only if $\OO = \OK$.
\end{theorem}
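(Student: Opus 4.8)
The plan is to prove the contrapositive of one direction and a direct argument for the other. The ``if'' direction is trivial: if $\OO=\OK$, then by Remark~\ref{ChowOK} the map $\ddiv_\OK\colon\calI(\OK)\to\Div(\OK)$ is an isomorphism, hence in particular injective. For the ``only if'' direction, I would assume $\OO\ne\OK$ and exhibit a nontrivial fractional ideal $\fraka\in\calI(\OO)$ with $\ddiv_\OO(\fraka)=0$; this shows $\ddiv_\OO$ is not injective.

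The key observation is that $\ddiv_\OO$ factors through the canonical map $\calI(\OO)\to\calI(\OK),\ \fraka\mapsto\fraka\OK$: indeed, by Lemma~\ref{ord} (or the discussion following Remark~\ref{ChowOK}) we have, for an invertible fractional ideal $\fraka$ with $\fraka_\frakp=a_\frakp\OO_\frakp$, that $\ord_\frakp(\fraka)=\sum_i\ord_{\frakP_i}(a_\frakp)\deg_\frakp(\frakP_i)$, which depends only on the localizations $(\fraka\OK)_{\frakP_i}$, i.e. only on $\fraka\OK$. So $\ker(\ddiv_\OO)$ contains $\ker\bigl(\calI(\OO)\to\calI(\OK)\bigr)$. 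Therefore it suffices to produce a nontrivial $\fraka\in\calI(\OO)$ with $\fraka\OK=\OK$. First I would handle this when $\OO^*\ne\OK^*$: pick $u\in\OK^*\setminus\OO^*$; then $\fraka:=u\OO$ is a principal fractional ideal of $\OO$ with $\fraka\OK=u\OK=\OK$, and $\fraka\ne\OO$ since $u\notin\OO^*$, so $\ddiv_\OO(\fraka)=\ddiv_\OO(u)=\ddiv_\OK(u\OK)$-pushed-forward $=0$ by Corollary~\ref{count}, while $\fraka\ne\OO$.

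The remaining case is $\OO^*=\OK^*$ but $\OO\ne\OK$. Here I would invoke the exact sequence~\eqref{eq:ES},
\[
1 \to \OK^*/\OO^* \to (\OK/\calF)^*/(\OO/\calF)^* \to \Pic(\OO) \to \Cl(\OK) \to 1,
\]
which shows that $\ker\bigl(\Pic(\OO)\to\Cl(\OK)\bigr)\isom(\OK/\calF)^*/(\OO/\calF)^*$ once $\OK^*=\OO^*$. If this kernel is nontrivial, lift a nontrivial class to some $\fraka\in\calI(\OO)$ with $\fraka\OK$ principal, say $\fraka\OK=\alpha\OK$; replacing $\fraka$ by $\alpha^{-1}\fraka$ we get $\fraka\OK=\OK$ with $\fraka\ne\OO$ (since $[\fraka]\ne[\OO]$ in $\Pic(\OO)$), and again $\ddiv_\OO(\fraka)=0$. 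If instead $(\OK/\calF)^*=(\OO/\calF)^*$, then Theorem~\ref{prop:Fix} applies: condition~(3) holds, hence so does~(1), i.e. $\calI(\OO)\to\calI(\OK)$ is injective. But Theorem~\ref{prop:Fix}(5) then forces $\calF$ to be squarefree with every residue field $\OK/\frakP_{i,j}\isom\F_2$ and every $r_i\ge2$; in this situation I claim $\ddiv_\OO$ itself fails to be injective directly. The main obstacle is precisely this last subcase: since $\calI(\OO)\to\calI(\OK)$ is injective here, the earlier trick is unavailable, and one must find a nonprincipal invertible ideal with trivial divisor by hand. Concretely, fix $i$ with $r_i\ge2$ and consider the two maximal ideals $\frakP_{i,1},\frakP_{i,2}$ of $\OK$ over $\frakp_i$; since $\deg_{\frakp_i}(\frakP_{i,j})=1$ and the residue fields are $\F_2$, the local ring $\OO_{\frakp_i}$ has $\ord_{\frakp_i}$ vanishing on $\OO_{\frakp_i}^*$, and one checks that the fractional $\OO$-ideal obtained by prescribing $\fraka_{\frakp_i}=\OO_{\frakp_i}$ but choosing a nonprincipal invertible class (possible because $\Pic(\OO_{\frakp_i})$ need not be trivial when $\OO_{\frakp_i}$ is not a DVR, e.g. a node) has $\ddiv_\OO(\fraka)=0$ while $\fraka\ne\OO$; alternatively, one sees that in this configuration $\ddiv_\OO$ has image inside $\bigoplus_{\frakp\text{ invertible}}\Z\frakp\oplus\bigoplus_i g_i\Z\frakp_i$ with $g_i=1$, yet $\calI(\OO)$ is strictly larger than its image because the $\frakp_i$-part carries the extra data $(\OK/\calF)^*\ne 1$ — contradicting that we are in the subcase $(\OK/\calF)^*=(\OO/\calF)^*$ which by Theorem~\ref{prop:Fix}(4) means $(\OK/\calF)^*$ is trivial, i.e. $\F_2^{\,\sum r_i}/(\text{image of }\OO)$ is trivial, impossible once some $r_i\ge 2$ unless $\OO=\OK$. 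I would finish by packaging this as: $(\OK/\calF)^*=(\OO/\calF)^*$ and $\OO\ne\OK$ together contradict Theorem~\ref{prop:Fix}'s characterization, so this subcase is vacuous, and the proof is complete.
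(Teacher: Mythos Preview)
Your first two cases (\(\OO^*\ne\OK^*\), and \(\OO^*=\OK^*\) with \(\Pic(\OO)\to\Cl(\OK)\) not injective) are correct. The gap is entirely in your final subcase, where \((\OK/\calF)^*=(\OO/\calF)^*\). This subcase is \emph{not} vacuous: Example~\ref{ex:inj} (\(K=\Q(\sqrt{-7})\), \(\OO=\Z[\sqrt{-7}]\)) is precisely an order with \(\OO\ne\OK\) satisfying condition~(3) of Theorem~\ref{prop:Fix}. In that example \((\OK/\calF)^*\isom\F_2^*\times\F_2^*\) is the trivial group, so your claim that ``\(\F_2^{\,\sum r_i}/(\text{image of }\OO)\) is trivial, impossible once some \(r_i\ge 2\)'' confuses the multiplicative and additive groups of \(\F_2\). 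Your other attempt also fails: \(\Pic(\OO_{\frakp_i})\) is always trivial because every invertible ideal of a local ring is principal, so there is no ``nonprincipal invertible class'' to pick locally.

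What you are missing is the second factor in the decomposition \(\ddiv_\OO=f_*\circ\ddiv_\OK\circ(\fraka\mapsto\fraka\OK)\). In your final subcase Theorem~\ref{prop:Fix}(5) gives \(r_i\ge 2\), so Corollary~\ref{cor:O1O2} says \(\ker(f_*)\ne 0\). Pick a nonzero \(D\in\ker(f_*)\), let \(\frakA:=\ddiv_\OK^{-1}(D)\in\calI(\OK)\setminus\{\OK\}\), and use Lemma~\ref{lem:IIepi} to lift \(\frakA\) to some \(\fraka\in\calI(\OO)\). Then \(\ddiv_\OO(\fraka)=f_*(D)=0\) while \(\fraka\ne\OO\) (since \(\fraka\OK=\frakA\ne\OK\)). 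This is exactly the paper's argument, recast contrapositively: the paper observes that injectivity of \(\ddiv_\OO\) would force \emph{both} \(\calI(\OO)\to\calI(\OK)\) and \(f_*\) to be injective, and then plays the consequence \(r_i=1\) (Corollary~\ref{cor:O1O2}) against the consequence \(r_i\ge 2\) (Theorem~\ref{prop:Fix}).
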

\begin{proof}
Suppose $\ddiv_\OO$ is injective.
Then $\OO^* = \OK^*$ by Corollary~\ref{count}.
Equation~\eqref{eq:fstar} shows that $\ddiv_\OO$ is the composition of the maps
\[ \calI(\OO) \longrightarrow \calI(\OK) \isom \Div(\OK) \overset{f_*}{\longrightarrow} \Div(\OK) \:. \]
Hence $\calI(\OO) \to \calI(\OK)$ must be injective and thus an isomorphism by Lemma~\ref{lem:IIepi}.
Thus $f_*$ must also be injective.
Lemma~\ref{lem:O1O2} shows that $r_i = 1$ for all $i$.
But this contradicts Theorem~\ref{prop:Fix} if $\OK \ne \OO$.
%
\end{proof}

\subsection*{Data Availability Statement} Data sharing not applicable to this article as no datasets were generated or analyzed during the current study.

\subsection*{Declarations} 
This work was funded by the Deutsche Forschungsgemeinschaft (DFG, German Research Foundation) -- Project-ID 491392403 -- TRR 358.

The authors have no financial or proprietary interests in any material discussed in this article.

\bibliography{chow}

\end{document}